\newtheorem{thm}{Theorem}
\newtheorem{lem}[thm]{Lemma}
\newtheorem{prop}[thm]{Proposition}
\newtheorem{example}[thm]{Example}
\newtheorem{cor}[thm]{Corollary}
\newtheorem{rem}[thm]{Remark}
\newtheorem{que}[thm]{Question}
\title{Positive braid knots of maximal topological~4-genus}
\author{Livio Liechti}
\thanks{The author is supported by the Swiss National Science Foundation ($\#159208$)}
\address{Mathematisches Institut, Universit\"at Bern, Sidlerstrasse 5, 3012 Bern, Schweiz}
\email{livio.liechti@math.unibe.ch}
\begin{document}
\begin{abstract}
We show that a positive braid knot has maximal topological 4-genus exactly if it has maximal signature invariant. 
As an application, we determine all positive braid knots with maximal topological 4-genus and compute the topological 4-genus for all positive braid knots with up to 12 crossings.
\end{abstract}
\maketitle

\section{Introduction}

The slice genus of a torus knot equals the ordinary genus $g$ by a theorem of Kronheimer and Mrowka~\cite{KM}. 
By work of Rudolph, this equality extends to the more general class of links bounding quasipositive surfaces, in particular to positive braid knots~\cite{Ru2}.
However, the story is very different for the \emph{topological 4-genus} $g_4$, 
i.e.~the minimal genus among surfaces which are properly, locally flatly embedded 
in the 4-ball and have a given knot $K$ as boundary (in contrast to the slice genus, where the embedding is required to be smooth).
A first example is due to Rudolph~\cite{Ru}: for the torus knot $T(5,6)$, the inequality $g_4<g$ holds. 
More recently, a large proportional difference $g - g_4$ with respect to $g$ was found for all torus knots with non-maximal signature $\sigma$ in~\cite{BFLL}.
On the other hand, there exists a lower bound, due to Kauffman and Taylor, for the topological 4-genus of knots: $2g_4(K) \ge \vert\sigma(K)\vert$ holds for any knot $K$~\cite{KT}.  
We show that for positive braid knots, this bound is in fact the only obstruction to non-maximal topological 4-genus, i.e.~$g_4<g$.

\begin{thm}
 \label{equivalenz}
For a positive braid knot $K$, the equality $g_4(K) = g(K)$ holds exactly if $\vert\sigma(K)\vert = 2g(K)$.
\end{thm}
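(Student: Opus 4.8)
The plan is to prove the two implications separately, with the reverse implication (maximal signature $\Rightarrow$ maximal topological genus) being immediate and the forward implication (maximal topological genus $\Rightarrow$ maximal signature) carrying all the content. For the easy direction I would combine three facts: the Bennequin surface of the positive braid realizes the Seifert genus $g(K)$; the smooth slice genus equals $g(K)$ by Kronheimer--Mrowka, so in particular $g_4(K)\le g(K)$ since any smooth slice surface is locally flat; and the Kauffman--Taylor bound $2g_4(K)\ge\vert\sigma(K)\vert$. If $\vert\sigma(K)\vert=2g(K)$, the last two give $g(K)\le g_4(K)\le g(K)$, hence equality. So the theorem reduces to the contrapositive of the forward direction: if $\vert\sigma(K)\vert<2g(K)$, then $K$ bounds a locally flat surface in the $4$--ball of genus strictly smaller than $g(K)$.

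To build such a surface I would work with the Seifert form $V$ of the Bennequin surface $\Sigma$ and exploit the standard upper bound for the topological $4$--genus coming from the Seifert form (the algebraic-genus bound, itself a consequence of Freedman's disk theorem): it suffices to produce a direct summand of $H_1(\Sigma)$ on which the Seifert form vanishes and which is realized by a subsurface whose associated knot has trivial Alexander polynomial; cutting this subsurface out and capping with Freedman's locally flat disks then lowers the genus. Thus the problem becomes algebraic and combinatorial: translate the hypothesis $\vert\sigma(K)\vert<2g(K)$ into the existence of such a summand.

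Here I would use the plumbing description of $\Sigma$: its symmetrized Seifert form $V+V^{T}$ is a nondegenerate symmetric form governed by the linking graph of the positive braid, and $\vert\sigma(K)\vert=2g(K)$ holds precisely when this form is definite. I expect definiteness to correspond, via the classical dictionary between such forms and root systems, to the linking graph being a disjoint union of $ADE$ Dynkin diagrams, so that non-maximal signature forces a non--$ADE$ subconfiguration — an affine Dynkin diagram or a cycle — carrying a primitive isotropic vector (the imaginary root). The core of the argument is then to promote this isotropic vector for $V+V^{T}$ to a genuine rank--$2$ Seifert-null \emph{direct summand}, to realize it by a genus--$1$ subsurface, and to check that the corresponding $2\times 2$ block of $V$ has determinant forcing trivial Alexander polynomial, so that Freedman's theorem applies and the genus drops by one. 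This gives $g_4(K)\le g(K)-1<g(K)$, as required.

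I expect the main obstacle to be exactly this last geometric--combinatorial step: the signature only sees the symmetrized form, whereas genus reduction via Freedman requires the full non-symmetric Seifert form to vanish on a summand and requires control of the Alexander polynomial of the piece being removed. Making the isotropic data of the affine subconfiguration interact correctly with the positive braid structure — so that a clean null summand and a trivial-Alexander-polynomial subsurface genuinely appear — is where the real work lies. If the reduction can moreover be iterated over all isotropic directions, one would in fact obtain the sharp value $g_4(K)=\lceil\vert\sigma(K)\vert/2\rceil$, of which the stated equivalence is the qualitative shadow.
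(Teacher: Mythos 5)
Your easy direction is correct and matches the paper: Kauffman--Taylor plus $g_4\le g$ settles the case $\vert\sigma(K)\vert=2g(K)$. But the hard direction as proposed has a genuine gap, and you name it yourself: promoting an isotropic vector of the symmetrized form to a rank--$2$ Alexander--trivial summand is not a final technical check, it \emph{is} the theorem, and your proposal supplies no mechanism for it. Two concrete obstacles. First, your dictionary ``definite $\Leftrightarrow$ ADE linking graph'' is only available when the linking pattern is a tree: for positive braids of index $\ge 3$ with several syllables the linking pattern contains cycles, and there the Seifert form is no longer determined by the graph (the paper's explicit Seifert matrix description is stated only for tree linking patterns), so the step ``non-maximal signature yields an affine subconfiguration carrying an imaginary root'' is unjustified in exactly the cases that carry the difficulty. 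Second, your criterion is misstated: if the Seifert form \emph{vanished} on a rank--$2$ summand $B$, then $\det\bigl(A\vert_{B\times B}-t(A\vert_{B\times B})^{\top}\bigr)$ would be $0$, not a unit. What is needed, and what the paper's subspaces $B_{\widetilde T},\dots,B_{\widetilde Y}$ actually exhibit, is Alexander--triviality, typically realized by a pair consisting of a null vector $v$ and a dual curve $e$ with block of the shape $\left(\begin{smallmatrix}0&1\\0&1\end{smallmatrix}\right)$, whose twisted determinant is the unit $t$; you half-correct this later, but a ``Seifert-null direct summand'' cannot be the target.

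The paper avoids the lattice-theoretic promotion entirely. It verifies Alexander--triviality by explicit computation on just four model fibre surfaces $\widetilde T$, $\widetilde E$, $\widetilde X$, $\widetilde Y$ (knot-enriched versions of Baader's signature obstructions $T,E,X,Y$), observes that genus defect is inherited under surface minors, and then locates one of these four minors in every positive braid knot with $\vert\sigma\vert<2g$ by braid-word combinatorics: for positive braid index at most $3$ a normal form reduces everything to plumbings of Hopf bands along plane trees, where Baader's forbidden subgraphs appear and are forced to extend to their tilde versions precisely because $K$ is a knot; for minimal positive index at least $4$ the paper proves $g_4<g$ unconditionally through a five-case analysis of induced subwords, supported by a lemma showing that a path-shaped linking pattern on two consecutive generators contradicts minimality of the index. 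Finally, your closing speculation $g_4=\lceil\vert\sigma\vert/2\rceil$ overshoots what the method can give: a single Alexander--trivial plane only yields defect one, which is sharp exactly when $\vert\sigma\vert=2g-2$ (this is how the paper computes its table of small knots), no iteration is established, and for torus knots the exact topological $4$--genus remains unknown in general.
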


Combining this result with Baader's classification of prime positive braid links of maximal signature~\cite{Ba}, 
we immediately get a full description of all prime positive braid knots of maximal topological 4-genus: they are exactly the torus knots of maximal signature.

\begin{cor}
\label{maximal}
The torus knots $T(2,n)$, $T(3,4)$ and $T(3,5)$ are the only prime positive braid knots $K$ with $g_4(K) = g(K)$. 
\end{cor}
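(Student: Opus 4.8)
The plan is to combine Theorem~\ref{equivalenz} with the classification of Baader~\cite{Ba}, so that the corollary reduces to a bookkeeping exercise: translate ``maximal topological 4--genus'' into ``maximal signature,'' read off the prime positive braid links of maximal signature, and then retain only those that happen to be knots.

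First I would invoke Theorem~\ref{equivalenz} to rewrite the hypothesis. For a positive braid knot $K$, the equality $g_4(K) = g(K)$ holds if and only if $\vert\sigma(K)\vert = 2g(K)$, that is, if and only if $K$ has maximal signature. The chain $\vert\sigma(K)\vert \le 2g_4(K) \le 2g(K)$ confirms that $\vert\sigma\vert = 2g$ is indeed the extremal case, so ``maximal signature'' is the correct reading. Consequently, the prime positive braid knots with $g_4 = g$ are precisely the prime positive braid knots of maximal signature.

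Next I would apply Baader's theorem~\cite{Ba}, according to which the prime positive braid links of maximal signature are exactly the torus links $T(2,n)$, $T(3,4)$ and $T(3,5)$. It then remains only to decide which of these are knots rather than genuine multi--component links. The links $T(3,4)$ and $T(3,5)$ have coprime parameters and are therefore knots, while $T(2,n)$ is a knot precisely when $n$ is odd. Collecting the connected cases yields exactly the list in the statement.

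Since both the reformulation in terms of signature and the classification itself are supplied by Theorem~\ref{equivalenz} and~\cite{Ba}, no genuinely hard step remains. The only point requiring a moment of care is the knot/link dichotomy inside Baader's list, together with the observation that it is the full \emph{biconditional} of Theorem~\ref{equivalenz} — and not merely the implication $\vert\sigma\vert = 2g \Rightarrow g_4 = g$ already furnished by the Kauffman--Taylor bound — that guarantees the resulting list is exhaustive.
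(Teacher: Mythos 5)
Your proposal is correct and follows exactly the paper's route: the corollary is obtained by combining the biconditional of Theorem~\ref{equivalenz} with Baader's classification of prime positive braid links of maximal signature and then discarding the multi-component cases (such as $T(2,n)$ with $n$ even, or any other links in Baader's list) to retain only knots. Your closing remark that the full equivalence, not just the Kauffman--Taylor direction, is what makes the list exhaustive is precisely the point of the paper's ``immediately'' as well.
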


Our proof of Theorem~\ref{equivalenz} uses two main ingredients. The first one is a homological criterion from~\cite{BFLL} using Freedman's disc theorem~\cite{Free}, 
allowing us to conclude $g_4<g$ for certain positive braids.
The second one is that \emph{genus defect} $\Delta g=g - g_4$ is inherited from \emph{surface minors}, i.e.~incompressible subsurfaces.
Similar to Baader's four surface minors $T$, $E$, $X$ and $Y$ obstructing maximal signature for positive braid links, 
we use enriched versions $\widetilde T$, $\widetilde E$, $\widetilde X$ and $\widetilde Y$ to obstruct maximal topological 4-genus for positive braid knots.

Theorem~\ref{equivalenz} also allows us to compute the topological 4-genus for positive braid knots $K$ with $\vert\sigma(K)\vert = 2g(K) -2$.
Combining the lower bound of Kauffman and Taylor with $g_4(K) < g(K)$ yields the exact result $g_4(K) = g(K)-1$.
This suffices to compute the topological 4-genus for prime positive braids knots with up to 12 crossings.
Table~1 lists all these knots, except for the torus knots $T(2,n)$, $T(3,4)$ and $T(3,5)$, 
which have maximal topological 4-genus. 
\begin{table}[h]
\begin{tabular}{| c | c | c | c | c |}
\hline
knot & braid notation & $g$ & $\vert\sigma\vert$ & $g_4$ \\ \hline
$10_{139}$ & $\sigma_1^4\sigma_2\sigma_1^3\sigma_2^2$ & 4 & 6 & 3 \\ \hline
$10_{152}$ & $\sigma_1^3\sigma_2^2\sigma_1^2\sigma_2^3 $ & 4 & 6 & 3  \\ \hline
$11n77$ & $\sigma_1^2\sigma_2^2\sigma_1\sigma_3\sigma_2^3\sigma_3^2$ & 4 & 6 & 3 \\ \hline
$12n242$ & $\sigma_1\sigma_2^2\sigma_1^2\sigma_2^7$ & 5 & 8 & 4 \\ \hline
$12n472$ & $\sigma_1\sigma_2^4\sigma_1^2\sigma_2^5$ & 5 & 8 & 4 \\ \hline
$12n574$ & $\sigma_1\sigma_2^6\sigma_1^2\sigma_2^3$ & 5 & 8 & 4 \\ \hline
$12n679$ & $\sigma_1^3\sigma_2^2\sigma_1^2\sigma_2^5$ & 5 & 8 & 4 \\ \hline
$12n688$ & $\sigma_1^3\sigma_2^4\sigma_1^2\sigma_2^3$ & 5 & 8 & 4 \\ \hline
$12n725$ & $\sigma_1\sigma_2^2\sigma_1^4\sigma_2^5$ & 5 & 8 & 4  \\ \hline
$12n888$ & $\sigma_1^3\sigma_2^3\sigma_1^3\sigma_2^3$ & 5 & 8 & 4  \\ \hline
\end{tabular}
\smallskip
\caption{Small positive braid knots.}
\end{table}
This list is created with the help of the software Knotinfo~\cite{Knotinfo}.
Previously, the values of the topological 4-genus for all these examples except $10_{152}$ were marked as unknown.
However, these values could also be deduced from work of Borodzik and Friedl on the algebraic unknotting number~\cite{BoFr}.
\newline

\textbf{Acknowledgements.} I warmly thank Sebastian Baader, Peter Feller and Lukas Lewark for many inspiring discussions and ideas that found their way into this article. 
Furthermore, I thank the referee for corrections and suggestions.

\section{Positive braids and trees}

A \emph{positive braid knot} is a knot that can be obtained from a positive braid via the closure operation,
an important example being torus knots.
A \emph{positive braid on $n+1$ strands} is a finite word in positive powers of the braid generators $\sigma_1, \dots ,\sigma_n$. 
By a theorem of Stallings, positive braid knots are fibred with the standard Seifert surface as fibre~\cite{St}.
As Baader did in~\cite{Ba}, we use \emph{brick diagrams} to visualise the fibre surface of positive braid knots: 
each horizontal bar corresponds to a braid generator $\sigma_i$ and each \emph{brick}, i.e. each rectangle, 
corresponds to a positive Hopf band in the plumbing construction of the fibre surface.
If two bricks link, it means that the core curves of the corresponding positive Hopf bands intersect once, see Figure~\ref{linking}.
Let the \emph{linking pattern} be the the plane graph obtained by putting a vertex into every brick and an edge between two vertices exactly if the corresponding bricks link.
It can be easily seen that if the intersection pattern of a positive braid $\beta$ is not connected, then the positive braid link $\widehat\beta$ is not prime. 
In fact, the converse is also true since positive braids are visually prime by a theorem of Cromwell~\cite{Cro}.
\begin{figure}
\def\svgwidth{90pt}
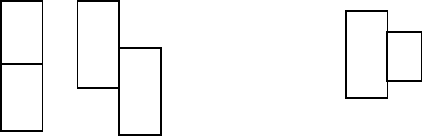
\caption{Bricks that link (on the left) and bricks that do not link (on the right).The two examples on the left yield the trefoil knot, while the example on the right yields a connected sum of two Hopf bands.}
\label{linking}
\end{figure}

\subsection{Trees} Let us for a moment consider the case where the linking pattern is a tree. 
There are many brick diagrams that yield the same tree as linking pattern. 
Since closures of positive braids corresponding to different brick diagrams might be equivalent as links in $\mathbf{R}^3$,
it is natural to ask whether the plane tree of the linking pattern uniquely determines the positive braid link up to ambient isotopy.
As we will see in the following remark, this is indeed the case.
\begin{rem}
\label{monodromiedeterminiert}
\emph{ 
The fibre surface $\Sigma(\beta)$ of a positive braid $\beta$ retracts to its brick diagram. 
Since for a successive plumbing of positive Hopf bands, the monodromy is conjugate to the product (in the succession of plumbing) of positive Dehn twists along 
the core curves of the Hopf bands~\cite{St},
the conjugacy class of the monodromy  
is completely determined by the plane tree given by the linking pattern of the brick diagram.
Therefore, also the corresponding fibred link $\widehat\beta$ is determined by the linking pattern of the brick diagram.
Indeed, the monodromy determines the mapping torus (up to homeomorphism fixing the boundary pointwise) 
and the fibredness condition dictates how to glue solid tori along the boundary of the mapping torus to obtain $\mathbf{S}^3$ containing a copy of the link $\widehat\beta$.
%
}
\end{rem}

Furthermore, if the linking pattern is a tree, a matrix for the Seifert form of the corresponding fibre surface $\Sigma=\Sigma(\beta)$ is particularly easy to describe: 
as a basis of $H_1(\Sigma; \mathbf{Z})$ take the core curves $[\alpha_i]$ of the positive Hopf bands corresponding to the bricks. 
A matrix $A$ for the Seifert form is then given by $A_{ii} = 1$ and $A_{ij} =1$ if $i<j$ and the curves $\alpha_i$ and $\alpha_j$ intersect 
(i.e.~if the corresponding vertices of the linking pattern are connected by an edge).
All other entries are equal to zero.

\begin{example}\emph{
Let $\widetilde T$, $\widetilde E$, $\widetilde X$ and $\widetilde Y$ be the canonical fibre surfaces
\begin{align*}
\widetilde T &= \Sigma(\sigma_1^5\sigma_2\sigma_1^4\sigma_2),\\
\widetilde E &= \Sigma(\sigma_1^7\sigma_2\sigma_1^3\sigma_2),\\
\widetilde X &= \Sigma(\sigma_1^2\sigma_2^2\sigma_1\sigma_3\sigma_2^2\sigma_3),\\
\widetilde Y &= \Sigma(\sigma_1^4\sigma_2^2\sigma_1^3\sigma_2),
\end{align*}
see Figure~\ref{tildas} for the corresponding brick diagrams and the linking patterns.
By exhibiting a two-dimensional subspace $B$ of $H_1(\widetilde X; \mathbf{Z})$ which is \emph{Alexander-trivial}, 
i.e.~ $\text{det}(A\vert_{B\times B}-t(A\vert_{B\times B})^{\top}) \in \mathbf{Z}[t^{\pm1}]$ is a unit for some matrix $A$ of the Seifert form,
it is shown in~\cite{BFLL} that the three-component link $\partial\widetilde X$ does not have maximal topological 4-genus. 
\begin{figure}[h]
\def\svgwidth{220pt}
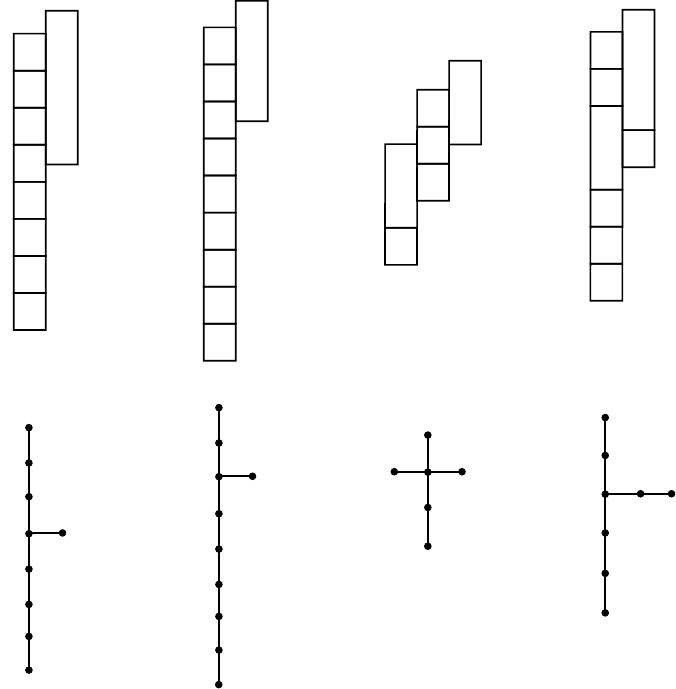
\caption{Brick diagrams for $\widetilde T$, $\widetilde E$, $\widetilde X$ and $\widetilde Y$ and the corresponding linking patterns.
The versions for $T$, $E$, $X$ and $Y$ are obtained by deleting the lowest brick and vertex, respectively.}
\label{tildas}
\end{figure}
More precisely, it is shown that the topological 4-genus equals one while the ordinary genus equals two.
In this example, we carry out the same computation for $\partial\widetilde T$, $\partial\widetilde E$ and $\partial\widetilde Y$.
For reasons of self-containedness, we also repeat the computation for $\partial\widetilde X$.
Number the vertices of the linking patterns in Figure~\ref{tildas} from top to bottom (and from left to right if several vertices are on the same level, as indicated for $\widetilde T$ in Figure~\ref{tildas}).
As a basis for the first homology, take the core curves of the corresponding Hopf bands with the chosen numbering. In this basis, consider the subspaces 
\begin{align*}
B_{\widetilde T} &=~\langle(-1,2,-3,4,-2,-3,2,-1,1)^{\top},~e_8\rangle,\\
B_{\widetilde E} &=~\langle(2,-4,6,-3,-5,4,-3,2,-1,1)^{\top},~e_9\rangle,\\
B_{\widetilde X} &=~\langle(-1,-1,2,-1,-1,0)^{\top},~e_6\rangle,\\
B_{\widetilde Y} &=~\langle(1,-2,3,-2,1,-2,1,-1)^{\top},~e_7\rangle
\end{align*}
of $H_1(\widetilde T; \mathbf{Z})$, $H_1(\widetilde E; \mathbf{Z})$, $H_1(\widetilde X; \mathbf{Z})$ and $H_1(\widetilde Y; \mathbf{Z})$, respectively.
Using the matrix $A$ of the Seifert form described above, it is a straightforward computation to see that in all four cases, the given subspaces are Alexander-trivial.
Writing $v$ for the first basis vector of $B_{\widetilde T}$ and $A_{\widetilde T}$ for the Seifert form corresponding to $\widetilde T$, one obtains 
\begin{align*}
v^\top A_{\widetilde T} v = 0,
v^\top A_{\widetilde T} e_8 = 1,\\
e_8^\top A_{\widetilde T} v = 0,
e_8^\top A_{\widetilde T} e_8 = 1,
\end{align*}
or, equivalently,
$$A_{\widetilde T}\vert_{B_{\widetilde T}\times B_{\widetilde T}}=
\begin{pmatrix}
  0 & 1\\
 0 & 1
 \end{pmatrix}.
$$
Consequently, $\text{det}(A_{\widetilde T}\vert_{B_{\widetilde T}\times B_{\widetilde T}}-t(A_{\widetilde T}\vert_{B_{\widetilde T}\times B})^{\top}) = t$, a unit in $\mathbf{Z}[t^{\pm1}]$.
The computation for the other cases works analogously.
Proposition~3 in~\cite{BFLL} now implies non-maximality of the topological 4-genus. Since the signature does not allow for a genus defect $g - g_4$ greater than one, 
we conclude 
\begin{align*}
g_4(\partial\widetilde T) &= g(\partial\widetilde T) -1 = 3,\\
g_4(\partial\widetilde E) &= g(\partial\widetilde E) -1 = 4,\\
g_4(\partial\widetilde X) &= g(\partial\widetilde X) -1 = 1,\\
g_4(\partial\widetilde Y) &= g(\partial\widetilde Y) -1 = 3.
\end{align*}
}\end{example}

In order to detect genus defect for a positive braid knot $\widehat\beta$, we search for minors $\widetilde{T},\widetilde{E},\widetilde{X}$ or $\widetilde{Y}$ in the fibre surface $\Sigma(\beta)$.
This is always based on the fact that the linking pattern of $\beta$ contains 
the tree corresponding to $\widetilde{T},\widetilde{E},\widetilde{X}$ or $\widetilde{Y}$ via deleting vertices and contracting edges.
One can then see that also $\Sigma(\beta)$ contains $\widetilde{T},\widetilde{E},\widetilde{X}$ or $\widetilde{Y}$, respectively, as a surface minor, implying $g_4(\widehat\beta) < g(\widehat\beta)$.
For example, Figure~\ref{H-tilde} shows how the tree corresponding to $\widetilde X$ is contained in the linking pattern of the positive braid $\sigma_1^2\sigma_2^3\sigma_1^2\sigma_2^2$.

\begin{figure}[h]
\def\svgwidth{210pt}
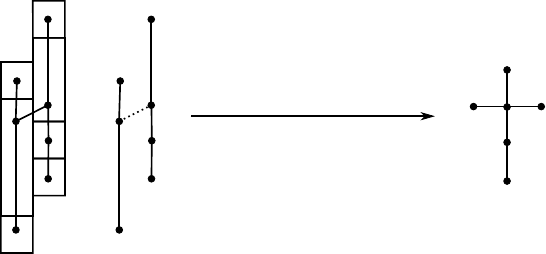
\caption{Contracting an edge of the intersection pattern of $\sigma_1^2\sigma_2^3\sigma_1^2\sigma_2^2$ yields the tree corresponding to $\widetilde X$.}
\label{H-tilde}
\end{figure}

Before we prove it for the case of positive braid knots, we show an analogue of Theorem~\ref{equivalenz} for knots obtained as \emph{plumbing of positive Hopf bands along a tree $\Gamma$}.
This notion generalises knots corresponding to brick diagrams having some plane tree as linking pattern. 
Starting from any finite plane tree $\Gamma$, we plumb positive Hopf bands (which are in one-to-one correspondence with the vertices of the tree) 
such that their core curves intersect once exactly if the corresponding vertices of $\Gamma$ are connected by an edge. 
Otherwise, they do not intersect.
Furthermore, they respect the circular ordering of the vertices given by the plane tree structure of $\Gamma$.
By the argument given in Remark~\ref{monodromiedeterminiert}, there is, up to ambient isotopy, only one way to do this. 
This construction is strictly more general than positive braid knots with a plane tree as linking pattern: vertices of a tree can have arbitrary valency, 
while for linking patterns associated with positive braid knots, this valency is bounded from above by $6$.

\begin{prop}
\label{trees_prop}
For a knot $K$ obtained by plumbing positive Hopf bands along a plane tree $\Gamma$, the equality $g_4(K) = g(K)$ holds exactly if $\vert\sigma(K)\vert = 2g(K)$.
\end{prop}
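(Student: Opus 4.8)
The plan is to prove the two implications separately, the forward one being immediate and the reverse carrying all the weight. For the ``if'' direction, suppose $\vert\sigma(K)\vert = 2g(K)$. Pushing the fibre surface, a genus $g(K)$ Seifert surface, into the $4$--ball shows $g_4(K) \le g(K)$, while the Kauffman--Taylor bound gives $2g_4(K) \ge \vert\sigma(K)\vert = 2g(K)$; together these force $g_4(K) = g(K)$. This direction uses nothing about the tree structure and applies to any knot.

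For the converse I would argue by contraposition: assuming $\vert\sigma(K)\vert < 2g(K)$, I want to produce a genus defect $g_4(K) < g(K)$. With the Seifert matrix $A$ described above, the symmetrised form $A + A^{\top}$ equals $2\,\mathrm{Id} + M(\Gamma)$, where $M(\Gamma)$ is the adjacency matrix of $\Gamma$. Since the diagonal is positive, the signature is maximal exactly when this form is positive definite, that is, exactly when the largest eigenvalue of $M(\Gamma)$ is smaller than $2$. By Baader's classification applied to the tree $\Gamma$ (equivalently, by the classification of trees of spectral radius at most $2$), non--maximal signature means precisely that $\Gamma$ contains one of the four minimal trees of non--maximal signature, namely the affine Dynkin trees $T,E,X,Y$ of spectral radius exactly $2$, as a minor obtained by deleting vertices and contracting edges.

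The key additional input is that $K$ is a knot. This forces $\det(A + A^{\top}) = \pm\det(K)$ to be odd, in particular nonzero, so $2\,\mathrm{Id} + M(\Gamma)$ is nonsingular and $\Gamma$ cannot itself be an affine Dynkin tree, for which $2\,\mathrm{Id} + M(\Gamma)$ has $0$ as an eigenvalue and is therefore singular. Hence the spectral radius of $M(\Gamma)$ is strictly larger than $2$ and $\Gamma$ strictly contains an affine tree as a subtree. I would then upgrade the unenriched minor to an enriched one: using the parity and determinant constraint coming from $K$ being a knot, I would locate, adjacent to the copy of $T,E,X$ or $Y$ inside $\Gamma$, an additional pendant vertex along the appropriate arm, so that $\Gamma$ in fact contains $\widetilde T,\widetilde E,\widetilde X$ or $\widetilde Y$ as a minor.

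Finally, I would invoke the inheritance of genus defect from surface minors: a minor of the linking pattern corresponds to an incompressible subsurface of the fibre surface, so containing $\widetilde T,\widetilde E,\widetilde X$ or $\widetilde Y$ as a tree minor yields the corresponding canonical fibre surface as a surface minor of $\Sigma(\Gamma)$. Since each of these four carries genus defect by the Example, and genus defect passes to the ambient knot, we conclude $g_4(K) < g(K)$, as desired. I expect the main obstacle to be precisely the enrichment step. The subtlety is that the knot hypothesis must be used quantitatively rather than just through the spectral radius: the star $S_5$ on six vertices, for instance, has spectral radius $\sqrt 5 > 2$ and contains $X$ as a minor, yet it contains none of the enriched trees; it is excluded only because its symmetrised form has even determinant and hence does not bound a knot. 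Verifying, by a case analysis over the four minimal affine minors, that the knot condition always supplies the missing vertex on a suitable arm is where the real work lies.
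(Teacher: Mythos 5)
Your overall architecture is exactly the paper's: Kauffman--Taylor disposes of the ``if'' direction, and the converse is to be settled by locating one of the enriched surfaces $\widetilde T,\widetilde E,\widetilde X,\widetilde Y$ as a surface minor and invoking inheritance of genus defect together with the computations of the Example. Your spectral preliminaries are also sound: maximal signature is equivalent to positive definiteness of $2\,\mathrm{Id}+M(\Gamma)$, and since a knot has odd determinant while the affine trees have singular symmetrised form, $\Gamma$ cannot itself be of type $T,E,X,Y$ (or $\widetilde D_n$). But the one step carrying all the content --- upgrading an unenriched affine minor to an enriched one using the knot hypothesis --- is announced, not proved; you say yourself that this ``is where the real work lies.'' Your own example of the star $S_5$ shows that strict containment of an affine tree does not yield an enriched minor, so the knot condition must be re-deployed in a genuine case analysis over the shapes of $\Gamma$, and the proposal supplies no such analysis. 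As it stands, this is a plan rather than a proof: the statement you would need (``odd determinant plus strict affine containment forces an enriched minor'') is, in effect, equivalent to classifying the enriched-minor-free trees of non-maximal signature, which is precisely what is missing.

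For comparison, the paper organises this analysis by the number of vertices of $\Gamma$ of degree at least three. With three or more such vertices, contracting the path between two of them produces a vertex of degree $\geq 4$ carrying a branch of length $\geq 2$ through the third, i.e.\ an $\widetilde X$ minor --- no knot hypothesis needed. With exactly two, the knot condition forces some branch at one of them to have length at least two (otherwise the closure has several components), again yielding $\widetilde X$. With exactly one, $\Gamma$ is star-like, non-maximal signature gives an induced copy of $T$, $E$, $X$ or $Y$, and the knot condition forces one branch to extend, producing the enriched tree as an induced subtree. Note also that your heuristic of finding ``an additional pendant vertex adjacent to the copy'' is not quite the right move: in the first two cases above the copy of $\widetilde X$ arises only after edge contractions, not as a subtree with a pendant vertex attached, so an argument along your lines would have to treat minors and induced subtrees separately --- exactly the distinction the paper's three-case split is designed to handle.
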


\begin{proof}
If $\vert\sigma(K)\vert = 2g(K)$, then $g_4(K) = g(K)$ follows from the signature bound of Kauffman and Taylor~\cite{KT}. If $\vert\sigma(K)\vert < 2g(K)$, we distinguish three different cases. 
If $\Gamma$ has at least three vertices of degree at least three, then the corresponding fibre surface contains $\widetilde{X}$ as a minor and thus $g_4(K) < g(K)$. 
If $\Gamma$ has two vertices of degree at least three, 
then at least one of the leaves has distance at least two from the closest vertex of degree at least three, since otherwise $K$ cannot be a knot.
Again the corresponding fibre surface contains $\widetilde{X}$ as a minor, since $\Gamma$ contains the tree corresponding to $\widetilde{X}$ via deleting vertices and contracting edges. 
If $\Gamma$ has only one vertex of degree at least three, then $\vert\sigma(K)\vert < 2g(K)$ holds if and only if $\Gamma$ contains the linking pattern of $T,E,X$ or $Y$ as an induced subgraph. 
This can be calculated directly from the associated Seifert forms. Alternatively, it also follows from Baader's classification of positive braid links of maximal signature~\cite{Ba}.
Again, for $K$ to be a knot, $\Gamma$ cannot be equal to $T,E,X$ or $Y$.
It follows that $\Gamma$ in fact contains the linking pattern of $\widetilde{T},\widetilde{E},\widetilde{X}$ or $\widetilde{Y}$ as an induced subgraph. 
Hence, the corresponding fibre surface contains $\widetilde{T},\widetilde{E},\widetilde{X}$ or $\widetilde{Y}$ as a surface minor.
\end{proof}

\section{Proof of Theorem 1}

The proof of Theorem~1 for positive braid knots $K$
is divided into two parts, depending on the \emph{positive braid index} of $K$, 
i.e.~the minimal index of a positive braid $\beta$ representing the knot $K$.
For $K$ of positive index at most three, we can essentially reduce the problem to Proposition~\ref{trees_prop}.
For $K$ of positive index at least four, we show that the strict inequality $g_4(K) < g(K)$ always holds.

\begin{prop}
\label{index3_prop}
For a knot $K$ obtained as the closure of a positive $3$-braid $\beta$, the equality $g_4(K) = g(K)$ holds exactly if $\vert\sigma(K)\vert = 2g(K)$.
\end{prop}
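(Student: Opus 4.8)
The plan is to prove the forward implication exactly as in Proposition~\ref{trees_prop} and then, for the reverse implication, to exhibit one of the enriched surface minors $\widetilde T,\widetilde E,\widetilde X,\widetilde Y$ inside the fibre surface $\Sigma(\beta)$, reducing the analysis of the linking pattern of a positive $3$--braid to the tree situation already settled. First I would record the easy direction: if $\vert\sigma(K)\vert=2g(K)$, then the Kauffman--Taylor bound $2g_4\ge\vert\sigma\vert$~\cite{KT} together with $g_4\le g$ forces $g_4(K)=g(K)$. For the reverse direction I may assume $\vert\sigma(K)\vert<2g(K)$ and must produce a genus defect. Since $g$ and $\vert\sigma\vert$ are additive under connected sum, and since positive braids are visually prime~\cite{Cro} so that a connected sum shows up as a disconnected linking pattern, some prime factor is again a positive $3$--braid knot with non--maximal signature; by subadditivity of $g_4$ it is enough to find a genus defect for that factor. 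Hence I may assume the linking pattern $G$ of $\beta$ is connected.

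Next I would describe $G$ explicitly. Reading $\beta$ as a word in $\sigma_1,\sigma_2$, the level--one bricks form a path, the level--two bricks form a path, and a level--one brick is joined to a level--two brick precisely when their defining intervals cross. A short computation shows that every vertex has degree at most four, and that $G$ contains a cycle exactly when the word contains an alternating stretch $\sigma_1\sigma_2\sigma_1\sigma_2\sigma_1$; each such stretch yields a triangle, and chains of these triangles are precisely the $T(3,m)$ ladders. This splits the proof into two cases. If $G$ is a tree, then $\Sigma(\beta)$ is a plumbing of positive Hopf bands along the plane tree $G$, so Proposition~\ref{trees_prop} applies verbatim and gives $g_4(K)<g(K)$.

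The main work, and the step I expect to be the obstacle, is the case where $G$ has cycles. Here I would invoke Baader's classification~\cite{Ba}: non--maximal signature forces one of the trees $T,E,X,Y$ to occur as a minor of $G$. The task is then to upgrade this to one of the \emph{enriched} minors $\widetilde T,\widetilde E,\widetilde X,\widetilde Y$. The idea is to contract the bounded triangular ladders so as to reduce $G$ to a tree minor while keeping control of the obstruction. Concretely, a long ladder already contains $\widetilde X$ as a surface minor after a single edge contraction, exactly as in Figure~\ref{H-tilde} and the $T(3,q)$ computation of~\cite{BFLL}; whereas a short ladder has a maximal--signature core $T(3,4)$ or $T(3,5)$, so that the non--maximality of $\sigma(K)$ must be produced by additional bricks lengthening one of the branches. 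As in the proof of Proposition~\ref{trees_prop}, the hypothesis that $K$ is a knot rather than a multi--component link is exactly what guarantees that this branch is long enough, so that the enriched tree rather than merely its unreduced version $T,E,X,Y$ is realised as a surface minor of $\Sigma(\beta)$.

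Finally, once one of $\widetilde T,\widetilde E,\widetilde X,\widetilde Y$ is located as a surface minor, the genus defect is inherited and $g_4(K)<g(K)$ follows; combined with the prime reduction this completes the reverse implication. The delicate point I anticipate is the bookkeeping in the cyclic case, namely verifying that contracting the ladders never eliminates the extra brick that separates $\widetilde T,\widetilde E,\widetilde X,\widetilde Y$ from $T,E,X,Y$, so that the enrichment survives precisely because $K$ is a knot.
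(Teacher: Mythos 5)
Your easy direction (Kauffman--Taylor), your reduction to the connected/prime case, and your tree case agree with the paper: the paper normalises $\beta$ by braid relations to $\sigma_1^{a_1}\sigma_2^{b_1}\cdots\sigma_1^{a_m}\sigma_2^{b_m}$ with $b_i\ge 2$, observes that $m\le 2$ gives a plane tree as linking pattern, and quotes Proposition~\ref{trees_prop}. But your cyclic case is a plan, not a proof, and it contains a genuine gap. First, a structural inaccuracy: cycles in the linking pattern do not require a consecutive alternating stretch $\sigma_1\sigma_2\sigma_1\sigma_2\sigma_1$. For instance $\sigma_1\sigma_2^2\sigma_1\sigma_2^2\sigma_1\sigma_2^3$ has triangles in its linking pattern although no five consecutive letters alternate; cycles appear exactly when the cyclic normal form has $m\ge 3$ syllables, whatever the powers, and once some $a_i\ge 2$ the cyclic part is no longer a ladder of triangles. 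This is not cosmetic: that very braid closes to a knot with positive definite Seifert form (it is $T(3,5)$), so ``cycles plus the knot condition'' cannot by itself produce an enriched minor, and any argument in the cyclic case must pinpoint quantitatively which configurations the hypothesis $\vert\sigma\vert<2g$ excludes.

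The central missing step is your upgrade from Baader's minors $T,E,X,Y$ to $\widetilde T,\widetilde E,\widetilde X,\widetilde Y$. In Proposition~\ref{trees_prop} this upgrade works because $\Gamma$ is a tree and the linking pattern of $T,E,X,Y$ sits as an induced subgraph at a vertex of degree at least three, so the knot condition visibly forces a branch to extend. In the cyclic case, Baader's theorem only hands you a surface minor of $\Sigma(\beta)$ --- equivalently, a tree minor of the linking pattern obtained after contractions --- with no control over where it sits relative to the cycles, hence no mechanism guaranteeing an extra Hopf band attached at the end of the correct branch. The sentence ``the hypothesis that $K$ is a knot is exactly what guarantees that this branch is long enough'' is precisely the assertion to be proved, and nothing in your sketch proves it. The paper's actual proof avoids this route entirely: for $m\ge 3$ it shows $g_4<g$ unconditionally (no appeal to Baader's classification, no signature hypothesis), via a parity argument (some $b_i$ must be odd, hence at least $3$, else the closure is not a knot) and an explicit case analysis on $m$ and the $a_i$, producing up to cyclic permutation the subwords $\sigma_1^2\sigma_2^3\sigma_1^2\sigma_2^2$ or $\sigma_1\sigma_2^5\sigma_1\sigma_2^4$, whence $\widetilde X$ or $\widetilde T$ as surface minors as in Figure~\ref{H-tilde}; the unique surviving knot, $(b_1,b_2,b_3)=(2,2,3)$ with $a_i=1$, is positive definite and so falls into the maximal-signature direction. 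The ``delicate bookkeeping'' you defer is exactly this case analysis, i.e.\ the entire content of the cyclic case; as written, your proposal does not establish the reverse implication.
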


\begin{proof}
 We assume to have applied all possible braid relations $\sigma_1\sigma_2\sigma_1 \to \sigma_2\sigma_1\sigma_2$ to the braid $\beta$, so, up to cyclic permutation, $\beta$ can be assumed to be of the form
 $\sigma_1^{a_1}\sigma_2^{b_1}\cdots\sigma_1^{a_m}\sigma_2^{b_m}$, where $a_i>0$ and $b_i \ge 2$. 
 If $m\le2$, the linking pattern of the braid is a plane tree and we are done by Proposition~\ref{trees_prop}. 
 We now show that in the other cases we already have $g_4(\widehat\beta) < g(\widehat\beta)$.
 For this, let $m>2$ and remark that at least one of the $b_i$ has to be odd and hence at least three, otherwise the permutation given by the braid leaves the third strand invariant and $\widehat\beta$ is not a knot.
 
 \textit{Case 1: $m\ge4$}. Up to cyclic permutation, the braid $\beta$ contains the word $\sigma_1^2\sigma_2^3\sigma_1^2\sigma_2^2$ as a \emph{subword}, i.e. via reducing powers of occurrences of generators, 
 and thus the fibre surface of $\widehat\beta$ contains $\widetilde X$ as a minor, implying $g_4(\widehat\beta) < g(\widehat\beta)$.

 \textit{Case 2: $m=3$, $a_i=1$}. If, up to cyclic permutation, $(b_1,b_2,b_3)$ equals $(2,2,3)$, the Seifert form of $\widehat\beta$ is positive definite.
 If $(b_1,b_2,b_3)$ equals $(2,3,3)$ or $(3,3,3)$, the second strand is left invariant by the permutation given by the braid, so we can assume that one of the $b_i$ is at least four.
 Furthermore, since one of the $b_i$ has to be odd, $(b_1,b_2,b_3)$ can be assumed to be at least $(2,3,4)$ or $(5,2,2)$ with respect to the product order. 
 In both cases, $\beta$ contains the word $\sigma_1\sigma_2^5\sigma_1\sigma_2^4$ as a subword and thus the fibre surface of $\widehat\beta$ contains $\widetilde T$ as a minor, implying $g_4(\widehat\beta) < g(\widehat\beta)$.

 \textit{Case 3: $m=3$, at least one $a_i \ge 2$}. As before, one of the $b_i$ has to be at least three, say $b_1$. 
 If $a_1$ or $a_2$ is at least two, then $\beta$ contains, up to cyclic permutation, 
 $\sigma_1^2\sigma_2^3\sigma_1^2\sigma_2^2$ as a subword and thus the fibre surface of $\widehat\beta$ contains $\widetilde X$ as a minor.  
 Now assume $a_1=a_2=1$ and $a_3\ge 2$. We also assume $b_2=b_3=2$, otherwise we are, up to cyclic permutation, in the case we already dealt with. 
 Note that the permutation given by a braid of the form $\sigma_1\sigma_2^{b_1}\sigma_1\sigma_2^2\sigma_1^2\sigma_2^2$ leaves the second strand invariant, 
 so $a_3$ needs to be at least three in order for $\widehat\beta$ to be a knot. 
 Now, up to cyclic permutation, $\beta$ must contain the word $\sigma_2^5\sigma_1\sigma_2^2\sigma_1^3$ and the fibre surface of $\widehat\beta$ contains $\widetilde T$ as a minor, implying $g_4(\widehat\beta) < g(\widehat\beta)$.
\end{proof}

\begin{lem}
\label{minimalitylemma}
Let $\beta$ be a positive braid of index $\ge 3$. 
If for some $i$ the linking pattern of the subword of $\beta$ induced by the generators $\sigma_i$ and $\sigma_{i+1}$ is a path, then $\beta$ is not of minimal positive index.
\end{lem}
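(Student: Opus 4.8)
The plan is to convert the path condition into a positive Markov destabilization of $\beta$. Write $w$ for the subword of $\beta$ in $\sigma_i,\sigma_{i+1}$, viewed as a positive $3$--braid on the strands $i,i+1,i+2$, and recall the combinatorial description of its brick diagram: the level--$i$ and level--$(i+1)$ bricks are intervals on the line of letter positions, two consecutive bricks of the same level are always linked (they share a bar), and a level--$i$ brick is linked with a level--$(i+1)$ brick exactly if their intervals cross, i.e.\ if exactly one endpoint of one interval lies strictly inside the other. The first thing I would record is that a cross--level edge can only reach a brick through one of its two bounding bars, so an interior brick of either same--level chain already has degree two and becomes a vertex of degree at least three as soon as it carries a cross--level edge. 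Hence, if the linking pattern of $w$ is a path, every cross--level edge sits at one of the two extreme bricks of the chains, and there is essentially only one such edge.

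From here I would identify $w$ up to the relevant moves. Since the linking pattern is a path on, say, $k$ vertices, Remark~\ref{monodromiedeterminiert} shows that plumbing positive Hopf bands along this path yields the fibre surface of the $2$--braid $\sigma_i^{\,k+1}$, i.e.\ of the torus link $T(2,k+1)$. Concretely I expect the endpoint analysis of the first step to show that, after applying the braid relations $\sigma_i\sigma_{i+1}\sigma_i\leftrightarrow\sigma_{i+1}\sigma_i\sigma_{i+1}$ together with a cyclic permutation, $w$ can be brought into the form $\sigma_i^{\,m}\sigma_{i+1}$ for some $m$, in which one of the two generators occurs exactly once; the unique cross--level edge from the first step is precisely the single interleaving that the braid relations absorb. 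In this form the lone $\sigma_{i+1}$ is a positive stabilization of $\sigma_i^{\,m}$ along the strand $i+2$, and undoing it removes that strand.

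The last and hardest step is to carry out these moves inside the ambient braid $\beta$ rather than in $w$ alone, and to ensure that they produce a genuine drop of the braid index. The braid relations and the cyclic permutation above involve only $\sigma_i$ and $\sigma_{i+1}$, and every generator $\sigma_j$ with $j\notin\{i-1,i,i+1,i+2\}$ commutes with both of them, so the only letters that can obstruct the rewriting are the two neighbours $\sigma_{i-1}$ and $\sigma_{i+2}$. The main obstacle is to guarantee that the local simplification of $w$ translates into an actual reduction of the braid index of $\widehat\beta$, and not merely into a Murasugi (connected--sum) splitting of $\widehat\beta$ at the same index: for an interior pair a single surviving $\sigma_{i+1}$ factors $\beta$ into a lower block in $\langle\sigma_1,\dots,\sigma_i\rangle$ and an upper block in $\langle\sigma_{i+2},\dots,\sigma_n\rangle$ meeting along strand $i+1$, and one must rule out that this is already index--minimal. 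I would control this using the unique cross--level edge, which pins down exactly how strand $i+2$ is attached to the rest; after commuting the far generators out of the way I expect this to reduce to the situation where the once--occurring generator can be moved to the top or bottom of $\beta$, so that a standard positive destabilization applies and $\beta$ is not of minimal positive index.
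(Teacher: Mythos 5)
Your opening combinatorial step is essentially sound (modulo phrasing: cross--column linking comes from interleaved brick intervals, not from shared bars): if both columns carry bricks, the path condition forces exactly one cross--column edge, joining an extreme brick of one chain to an extreme brick of the other, and this is precisely what underlies the paper's normal form $\sigma_i^k\sigma_{i+1}\sigma_i\sigma_{i+1}^l$ (reached by cyclic permutation and possibly reversal). Your reduction of $w$ in isolation is also correct: repeated use of $\sigma_{i+1}\sigma_i\sigma_{i+1}\to\sigma_i\sigma_{i+1}\sigma_i$ gives $\sigma_i^{k+l}\sigma_{i+1}\sigma_i$, cyclically $\sigma_i^{k+l+1}\sigma_{i+1}$. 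And your final worry is actually a non--issue: once some generator $\sigma_j$ occurs exactly once in a positive word, non--minimality is automatic, since letters of index $<j$ commute with letters of index $>j$, so up to cyclic permutation $\beta\simeq uv\sigma_j$ with $u$ in low and $v$ in high generators, and the visible connected sum re--braids positively as $uv^-$ on one fewer strand; no interior--versus--top distinction needs to be ruled out.

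The genuine gap is the step you flagged but did not close: carrying the rewriting out inside $\beta$. Each application of the braid relation needs the triple $\sigma_{i+1}\sigma_i\sigma_{i+1}$ adjacent, and the obstructing letters cannot in general be commuted out of the way: $\sigma_{i+2}$ blocks sliding the lone $\sigma_i$ through the $\sigma_{i+1}$--block, and $\sigma_{i-1}$ blocks the symmetric reduction through the $\sigma_i$--block. This is not a marginal configuration --- it is exactly what occurs where the paper invokes the Lemma in Proposition~\ref{index4_prop}: words such as $\sigma_1^{a_1}\sigma_2^{b_1}\sigma_1^{a_2}\sigma_2^{b_2'}\sigma_3^{c_1}\sigma_2\sigma_3$ have $\sigma_1$'s splitting the $\sigma_2$--column, and for index $\ge 5$ there are $\sigma_4$'s splitting the $\sigma_3$--powers, so your rewriting stalls precisely there, and ``I expect this to reduce'' is where the proof is missing. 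The paper avoids word rewriting altogether: it normalizes positions using only commutations with distant generators ($\sigma_{i-1}$ commutes with every generator of index $\ge i+1$, and dually) plus cyclic permutation, then performs a global move on the brick diagram --- merging the two columns into one, as in Figure~\ref{nichtminimal} --- which drops the strand count by one while leaving the plane linking pattern unchanged; equal linking patterns give conjugate monodromies by Remark~\ref{monodromiedeterminiert}, hence homeomorphic mapping tori and ambient isotopic fibred closures. That fibredness argument sidesteps any need to realize the index reduction by braid relations and Markov moves through positive words, which is exactly the step your sketch cannot supply; to salvage your route you would need a new idea for handling the interleaved $\sigma_{i-1}$ and $\sigma_{i+2}$, not just bookkeeping.
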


\begin{proof}
We can assume the subword of $\beta$ induced by the generators $\sigma_i$ and $\sigma_{i+1}$ to be $\sigma_i^k\sigma_{i+1}\sigma_i\sigma_{i+1}^l$, 
for some positive numbers $k$ and $l$.
This can be achieved by cyclic permutation and possibly reversing the order of the word $\beta$, operations that do not change the associated fibre surface.
Similarly, we can assume that all occurrences of generators with index smaller than $i$ come before the last occurrence of $\sigma_i$
and, likewise, all occurrences of generators with index greater than $i+1$ come after the first occurrence of $\sigma_{i+1}$. 
The situation is schematically depicted in Figure~\ref{nichtminimal} on the left.
\begin{figure}[h]
\def\svgwidth{150pt}
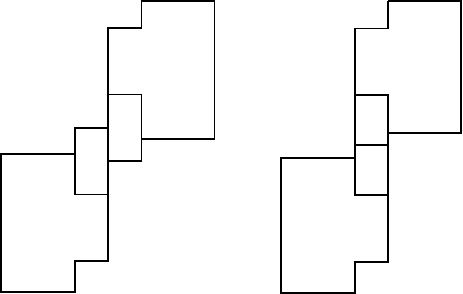
\caption{Two brick diagrams with the same linking pattern. The positive braid $\beta_2^-$ is defined to be $\beta_2$, but with all indices of braid generators decreased by one.}
\label{nichtminimal}
\end{figure}
Now consider the brick diagram
obtained by merging the two columns corresponding to the generators $\sigma_i$ and $\sigma_{i+1}$ as indicated in Figure~\ref{nichtminimal} on the right. 
By definition, the corresponding positive braid $\beta'$ has fewer strands than $\beta$.
To show that the closures of $\beta$ and $\beta'$ are ambient isotopic in $\mathbf{R}^3$, we study the corresponding monodromy homeomorphism of their fibre surfaces $\Sigma(\beta)$ and $\Sigma(\beta')$.
Since the linking patterns of the two brick diagrams are equal, the corresponding monodromies are conjugate and the closures of $\beta$ and $\beta'$ are ambient isotopic by the argument used in Remark~\ref{monodromiedeterminiert}.
\end{proof}

\begin{prop}
 \label{index4_prop}
 If $K$ is a prime knot obtained as the closure of a positive braid $\beta$ of minimal positive index $\ge 4$, then $g_4(K) < g(K)$.
\end{prop}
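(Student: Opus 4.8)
The plan is to exhibit one of the enriched surfaces $\widetilde T,\widetilde E,\widetilde X,\widetilde Y$ as a surface minor of the fibre surface $\Sigma(\beta)$; by the discussion preceding Proposition~\ref{trees_prop} this forces the genus defect $g_4(K)<g(K)$. As explained there, it is enough to find the plane tree associated with one of these four surfaces as a minor of the linking pattern $G$ of $\beta$, where minor means obtained by deleting vertices and contracting edges. Two structural facts are available from the start: since $K$ is prime and positive braids are visually prime, $G$ is connected by Cromwell's theorem~\cite{Cro}; and since $\beta$ realises the minimal positive index, which is at least $4$, Lemma~\ref{minimalitylemma} tells us that for every $i$ the subpattern $G_i\subseteq G$ spanned by the columns of $\sigma_i$ and $\sigma_{i+1}$ is \emph{not} a path.

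First I would record the shape of a two--column subpattern: consecutive bricks of a common column always link, so $G_i$ is the union of two column--paths together with the cross--edges joining interleaving bricks of the two columns. A connected graph of this form fails to be a path only if it contains a cycle (which occurs as soon as two cross--edges are present) or a vertex of degree at least three. Since $G$ is connected and $n\ge 3$, there is a middle generator, say $\sigma_2$, whose column lies in the two non--path subpatterns $G_1$ and $G_2$. I would use this to locate a branch point or a cycle adjacent to the $\sigma_2$--column, treating the degenerate possibility that some $G_i$ is non--path only by being disconnected with the help of the global connectivity of $G$, which then routes the missing links through the neighbouring column.

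The heart of the proof is to concentrate the branching found along the shared $\sigma_2$--column into one of the four target trees. Contracting the $\sigma_2$--column to a single vertex merges the branch points inherited from $G_1$ and $G_2$; together with the cross--edges to the $\sigma_1$-- and $\sigma_3$--columns and the pendant bricks there, this typically yields a vertex of degree at least four carrying a branch of length at least two, i.e.~the tree of $\widetilde X$, and in the remaining configurations one of the spiders underlying $\widetilde Y,\widetilde T$ or $\widetilde E$. I would organise this as a finite case analysis according to whether the non--path behaviour of $G_1$ and $G_2$ comes from a cycle or from a branch point and according to where that branch point sits in the column; the edge--contraction of Figure~\ref{H-tilde} is the model move in every case.

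The step I expect to be the main obstacle is making sure one always lands on an \emph{enriched} tree rather than on a bare diagram $T,E,X$ or $Y$, because only the enriched trees carry the Alexander--trivial two--dimensional subspace that detects genus defect. Here I would argue exactly as in Propositions~\ref{trees_prop} and~\ref{index3_prop}: if the branched configuration occurred with every branch as short as possible, the permutation induced by $\beta$ would fix a strand, contradicting that $\widehat\beta=K$ is a knot. This parity obstruction forces at least one branch of the minimal configuration to be lengthened, upgrading the minor to $\widetilde T,\widetilde E,\widetilde X$ or $\widetilde Y$ and hence giving $g_4(K)<g(K)$, as desired.
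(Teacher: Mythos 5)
Your strategy is the paper's strategy in outline --- exploit connectivity of the linking pattern (primality plus Cromwell), use Lemma~\ref{minimalitylemma} to forbid two--column path patterns, and hunt for the trees of $\widetilde T,\widetilde E,\widetilde X,\widetilde Y$ as minors, with strand--parity arguments supplying the extra branch length --- but the proposal stops exactly where the actual work begins, and the one concrete plan you commit to would fail. You propose to concentrate all branching at a \emph{fixed} middle column by contracting the $\sigma_2$--column. The paper's Case~1 is precisely the configuration this cannot handle: if the induced subword in $\sigma_1,\sigma_2$ is exactly $\sigma_1\sigma_2^2\sigma_1\sigma_2^2$, the knot condition forces each $\sigma_2^2$--block to be split by some $\sigma_3^{c}$ with $c\ge 2$, and if both splitting powers equal $2$, then nothing in columns $1$--$3$ contains any of the four trees; one must repeat the same dichotomy for columns $2,3$, then $3,4$, and so on, with termination guaranteed only because the index is finite and the top column cannot be split. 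That is an induction propagating up the columns, not a local analysis at $\sigma_2$. Likewise, the paper's Case~5 isolates the word $\gamma=\sigma_1^{a_1}\sigma_2\sigma_3^2\sigma_2\sigma_1^{a_2}\sigma_3^{c_3}\sigma_2\sigma_3^2\sigma_2$, whose closure is never a knot at index four, so the index must be at least five and the $\widetilde X$--minor is finally extracted from columns $3$ and $4$ --- again invisible to an argument anchored at the $\sigma_2$--column. Announcing ``a finite case analysis'' does not substitute for these cases; they are the content of the proposition.

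The second gap is the upgrade step. Your claim that the parity obstruction always lengthens a branch of a bare $T,E,X,Y$--configuration is imported from Proposition~\ref{trees_prop}, where it is valid because the entire link is determined by the tree (Remark~\ref{monodromiedeterminiert}). For a general positive braid the linking pattern contains cycles, and crossings far away from your candidate minor can knot the closure without extending any branch of it, so ``minimal configuration $\Rightarrow$ fixed strand'' is false as an abstract implication. The paper never argues this way: it pins down explicit induced subwords ($\sigma_i^2\sigma_{i+1}^3\sigma_i^2\sigma_{i+1}^2$ for $\widetilde X$; $\sigma_1\sigma_2^5\sigma_1\sigma_2^4$ or $\sigma_2^5\sigma_1\sigma_2^2\sigma_1^3$ for $\widetilde T$), applies the parity argument only to those concrete local pictures, and --- crucially --- closes several branches of the analysis not by producing a minor at all but by a contradiction with minimality (a path pattern in columns $2,3$, via Lemma~\ref{minimalitylemma}) or with primality or knottedness; these terminal outcomes are absent from your cycle/branch--point/disconnected trichotomy. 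Finally, note that contracting an entire column whose bricks carry their own cross--edges is a graph--minor move whose realization as a \emph{surface} minor is not covered by the paper's discussion, which only uses subword deletion together with the single contraction move of Figure~\ref{H-tilde}; if you want wholesale column contractions, you owe a separate justification that the resulting tree is realized by an incompressible subsurface.
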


\begin{proof}
Let $\beta$ be a positive braid of minimal braid index $\ge4$ whose closure $\widehat\beta$ is a prime knot. 
We assume to have applied all possible braid relations $\sigma_i\sigma_{i+1}\sigma_i \to \sigma_{i+1}\sigma_i\sigma_{i+1}$ to $\beta$. 
This process terminates: it increases the sum of all indices of generators (counted with multiplicity) while not changing the number of generators.
In other words, the crossings of $\beta$ are as far to the right as possible.
We can furthermore assume that $\beta$ still contains, up to cyclic permutation, the subword $\sigma_1\sigma_2^2\sigma_1\sigma_2^2$, since otherwise $\beta$ would not be of minimal index. 

We first delete, without disconnecting the linking pattern, 
a minimal amount of occurrences of $\sigma_2$ so that the induced subword of $\beta$ in the first two generators is, after a possible cyclic permutation, of the form 
$\sigma_1^{a_1}\sigma_2^{b_1}\sigma_1^{a_2}\sigma_2^{b_2}$, where $b_1$ and $b_2$ are greater or equal to two.
For example, if the induced subword of $\beta$ in the first two generators is $\sigma_1\sigma_2^2\sigma_1\sigma_2^2\sigma_1\sigma_2^2$, we delete one occurrence of $\sigma_2$ (to the power two), 
yielding, after a possible cyclic permutation, $\sigma_1\sigma_2^2\sigma_1^2\sigma_2^2$.
Note that in case $a_1=a_2=1$, no generators $\sigma_2$ have to be deleted to achieve the desired form.

\textit{Case 1: $a_1=a_2=1,$ $b_1=b_2=2$}. In this case, we did not have to delete any occurrence of $\sigma_2$ 
and the induced subword of $\beta$ in the first two generators is exactly $\sigma_1\sigma_2^2\sigma_1\sigma_2^2$.
Both occurrences of $\sigma_2$ have to be split by an occurrence of $\sigma_3$, 
since otherwise the permutation given by $\beta$ would leave the first or second strand invariant, see Figure~\ref{invariantstrand}. 
\begin{figure}[h]
\def\svgwidth{75pt}
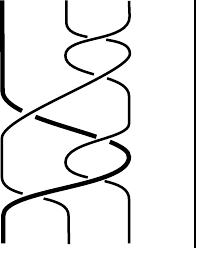
\caption{If no occurrence of $\sigma_3$ splits the first (second) occurrence of $\sigma_2$, the first (second) strand is left invariant by the permutation defined by $\beta$.}
\label{invariantstrand}
\end{figure}
Furthermore, these occurrences have to be to the power at least two, since we ruled out the possibility of a braid relation $\sigma_2\sigma_3\sigma_2 \to \sigma_3\sigma_2\sigma_3$. 
If one of the occurrences of $\sigma_3$ is to some power at least three,
$\beta$ contains, up to cyclic permutation, the subword $\sigma_2^2\sigma_3^3\sigma_2^2\sigma_3^2$ 
and thus the fibre surface of $\widehat\beta$ contains the minor $\widetilde{X}$, implying $g_4(\widehat{\beta}) < g(\widehat{\beta})$.
If the power of both occurrences of $\sigma_3$ is equal to two, we repeat the same argument: both occurrences of $\sigma_3$ have to be split by an occurrence of $\sigma_4$, 
otherwise the permutation given by $\beta$ would leave the first or second strand invariant. As before, we distinguish cases 
depending on the powers of the occurrences of $\sigma_4$. We repeat this argument and case distinction with increasing index as long as necessary. 
Eventually, some splitting occurrence has to be of power at least three and 
$\beta$ contains, up to cyclic permutation, the subword $\sigma_i^2\sigma_{i+1}^3\sigma_i^2\sigma_{i+1}^2$.

\textit{Case 2: $a_1=a_2=1,$ $b_1\ge3,$ $b_2=2$}. In this case, we did not have to delete any occurrence of $\sigma_2$
and the induced subword of $\beta$ in the first two generators is exactly $\sigma_1\sigma_2^{b_1}\sigma_1\sigma_2^2$.
As in Case~1, the second occurrence of $\sigma_2$ has to be split by an occurrence of $\sigma_3$
(otherwise the permutation given by $\beta$ would leave the second strand invariant),
so $\beta$ must contain a subword of the form $\sigma_1\sigma_2^{b_1}\sigma_1\sigma_2\sigma_3^{c_1}\sigma_2$. 
Note that $c_1$ must be greater or equal to two, since we applied all possible braid relations $\sigma_2\sigma_3\sigma_2\to\sigma_3\sigma_2\sigma_3$.
\begin{figure}[h]
\def\svgwidth{85pt}
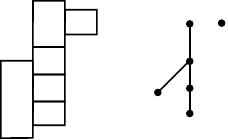
\caption{}
\label{nichtprim}
\end{figure}
Figure~\ref{nichtprim} depicts the brick diagram and intersecting pattern of this subword for $b_1=3$ and $c_1=2$. 
Since the intersection pattern is not connected, there has to be another occurrence of $\sigma_3$ in $\beta$, otherwise the closure $\widehat\beta$ would not be prime. 
What are the possibilities for the other occurrences of $\sigma_3$?
If the first occurrence of $\sigma_2$ is split by an occurrence of $\sigma_3$, again the occurrence of $\sigma_3$ has to be to the power at least two. 
Hence, $\beta$ contains, up to reversing order and cyclic permutation, the subword $\sigma_2^3\sigma_3^2\sigma_2^2\sigma_3^2$ 
and the fibre surface of $\widehat\beta$ contains the minor $\widetilde{X}$, implying $g_4(\widehat{\beta}) < g(\widehat{\beta})$.
Similarly, if $\beta$ contains, up to reversing order and cyclic permutation, the subword $\sigma_1\sigma_2^3\sigma_1\sigma_3\sigma_2^2\sigma_3$,
again the fibre surface of $\widehat\beta$ contains the minor $\widetilde{X}$, implying $g_4(\widehat{\beta}) < g(\widehat{\beta})$.
If we exclude these cases, the only two possibilities for the induced subword of $\beta$ in the first three generators are 
$\sigma_1\sigma_2^{b_1}\sigma_1\sigma_2\sigma_3^{c_1}\sigma_2\sigma_3^{c_2}$ and $\sigma_1\sigma_2^{b_1}\sigma_1\sigma_3^{c_2}\sigma_2\sigma_3^{c_1}\sigma_2$, 
which are, up to cyclic permutation, reverse to each other. If $c_2$ is greater or equal to two, 
the fibre surface of $\widehat\beta$ again contains the minor $\widetilde{X}$, implying $g_4(\widehat{\beta}) < g(\widehat{\beta})$, 
so we assume the induced subword of $\beta$ in the first three generators to be, up to reversing order and cyclic permutation, 
$\sigma_1\sigma_2^{b_1}\sigma_1\sigma_2\sigma_3^{c_1}\sigma_2\sigma_3$.
But in this case, $\beta$ restricted to the second and third generator has a path as linking pattern and is not minimal by Lemma~\ref{minimalitylemma}. 

\textit{Case 3: $a_1=a_2=1,$ $b_1,b_2\ge3$}.
The only possibility not considered in Case~2 is the following: $\beta$ contains, up to cyclic permutation, the subword $\sigma_1\sigma_2^3\sigma_1\sigma_2\sigma_3^2\sigma_2\sigma_3^2\sigma_2$, 
thus also $\sigma_1\sigma_2^3\sigma_1\sigma_2\sigma_3^2\sigma_2\sigma_3^2$ and the fibre surface of $\widehat\beta$ contains $\widetilde X$ as a minor, implying $g_4(\widehat{\beta}) < g(\widehat{\beta})$.
However, when reconsidering our discussion of Case~2, the powers of $\sigma_2$ appearing could be greater, 
so we get $\sigma_1\sigma_2^{b_1}\sigma_1\sigma_2^{b_2'}\sigma_3^{c_1}\sigma_2^{b_2''}\sigma_3^{c_2}$ as possibilities
for the induced subword of $\beta$ in the first three generators, where $b_2=b_2'+b_2''$. Again, note that if $c_2$ or $b_2''$ is greater or equal to two, 
the fibre surface of $\widehat\beta$ contains the minor $\widetilde{X}$, implying $g_4(\widehat{\beta}) < g(\widehat{\beta})$, 
so we assume the induced subword of $\beta$ in the first three generators to be, up to reversing order and cyclic permutation, 
$\sigma_1\sigma_2^{b_1}\sigma_1\sigma_2^{b_2'}\sigma_3^{c_1}\sigma_2\sigma_3$.
Again, $\beta$ restricted to the second and third generator has a path as linking pattern and is not minimal.

\textit{Case 4: $a_1+a_2\ge3,$ $b_1+b_2\ge5$}. We can apply the same arguments as in the cases above. 
From this it follows that if the fibre surface of $\widehat\beta$ contains no minor $\widetilde X$, then
the induced subword in the first three generators is, after the described process of deleting some generators $\sigma_2$, either
$\delta=\sigma_1^{a_1}\sigma_2^{b_1}\sigma_1^{a_2}\sigma_2^{b_2'}\sigma_3^{c_1}\sigma_2^{b_2''}\sigma_3^{c_2}$ or 
$\mu=\sigma_1^{a_1}\sigma_2^{b_1}\sigma_1^{a_2}\sigma_3^{c_2}\sigma_2^{b_2'}\sigma_3^{c_1}\sigma_2^{b_2''}$.  
As before, these two words are, up to cyclic permutation, reverse to each other. But since we might have deleted some generators $\sigma_2$ to obtain them, we should consider them separately. 
Again as before, if $c_2$ or $b_2''$ is greater or equal to two, 
the fibre surface of $\widehat\beta$ contains the minor $\widetilde{X}$, implying $g_4(\widehat{\beta}) < g(\widehat{\beta})$.
If we restrict $\delta=\sigma_1^{a_1}\sigma_2^{b_1}\sigma_1^{a_2}\sigma_2^{b_2'}\sigma_3^{c_1}\sigma_2\sigma_3$ to the second and third generator, 
the linking pattern is a path. Note that reinserting the deleted generators $\sigma_2$ would split $\sigma_1^{a_1}$ or $\sigma_1^{a_2}$. 
In any case, the linking pattern of $\beta$ restricted to the second and third generator is still a path and $\beta$ is not minimal. 
This does not necessarily hold for the other possibility $\mu=\sigma_1^{a_1}\sigma_2^{b_1}\sigma_1^{a_2}\sigma_3\sigma_2^{b_2'}\sigma_3^{c_1}\sigma_2$.
However, note that if ${b_2'}$ is greater or equal to two, then $\mu$ contains the subword $\sigma_1\sigma_2^2\sigma_1\sigma_3\sigma_2^2\sigma_3^2$
and the fibre surface of $\widehat\beta$ contains the minor $\widetilde{X}$, implying $g_4(\widehat{\beta}) < g(\widehat{\beta})$. 
So we are left with the possibility $\mu=\sigma_1^{a_1}\sigma_2^{b_1}\sigma_1^{a_2}\sigma_3\sigma_2\sigma_3^{c_1}\sigma_2$.
If all the deleted occurrences of $\sigma_2$ appeared before the first occurrence of $\sigma_3$ in $\mu$, after a cyclic permutation 
the linking pattern of $\beta$ restricted to the second and third generator again is a path and $\beta$ is not minimal.
If some deleted occurrence of $\sigma_2$ appeared after the first occurrence of $\sigma_3$ in $\mu$, then $\beta$ contains the word $\sigma_1\sigma_2^2\sigma_1\sigma_3\sigma_2^2\sigma_3^2$
as a subword and, as before, $g_4(\widehat{\beta}) < g(\widehat{\beta})$.

\textit{Case 5: $a_1+a_2\ge3,$ $b_1=b_2=2$}. In this case, there is one last new possibility: 
as in Case~1, the word $\sigma_2\sigma_3^{c_1}\sigma_2^2\sigma_3^{c_2}\sigma_2$ could be a subword of $\beta$ (without directly yielding $\sigma_2^3\sigma_3^2\sigma_2^2\sigma_3^2$ as a subword).
Again, since we applied all possible braid relations $\sigma_2\sigma_3\sigma_2 \to \sigma_3\sigma_2\sigma_3$, $c_1$ and $c_2$ are greater or equal to two.
If $\beta$ should, up to cyclic permutation, neither contain $\sigma_2^3\sigma_3^2\sigma_2^2\sigma_3^2$ nor $\sigma_2^2\sigma_3^3\sigma_2^2\sigma_3^2$ as subword, 
then $c_1$ and $c_2$ are both equal to two and the induced subword of $\beta$ in the first two generators is exactly $\sigma_1^{a_1}\sigma_2^2\sigma_1^{a_2}\sigma_2^2$.
In particular, we again did not have to delete any occurrence of $\sigma_2$ in the deletion process described above.
If the induced subword of $\beta$ in the first three generators was $\sigma_1^{a_1}\sigma_2\sigma_3^2\sigma_2\sigma_1^{a_2}\sigma_2\sigma_3^2\sigma_2$,
the permutation given by $\beta$ would leave the third strand invariant and $\widehat\beta$ would not be a knot. Thus, there has to be at least one more occurrence of a generator $\sigma_3$.
This gives the last two possibilities of induced subwords of $\beta$ in the first three generators: 
$\gamma=\sigma_1^{a_1}\sigma_2\sigma_3^2\sigma_2\sigma_1^{a_2}\sigma_3^{c_3}\sigma_2\sigma_3^2\sigma_2$ and 
$\sigma_1^{a_1}\sigma_2\sigma_3^2\sigma_2\sigma_1^{a_2}\sigma_2\sigma_3^2\sigma_2\sigma_3^{c_3}$,
which are, up to cyclic permutation, reverse to each other.
If $\beta$ is of index four, then actually $\beta$ would have to equal $\gamma$. But the closure $\widehat\gamma$ can never be a knot,
since the last two strands get permuted among themselves independently of $a_1,a_2$ and $c_3$. 
Now let $\beta$ be of index at least five.
If $\gamma$ is the induced subword of $\beta$ in the first three generators, 
one of the occurrences of $\sigma_3^2$ has to be separated by an occurrence of $\sigma_4$ to the power at least two 
(recall that we ruled out the possibilty of a braid relation $\sigma_3\sigma_4\sigma_3 \to \sigma_4\sigma_3\sigma_4$), 
since otherwise the first two strands would get permuted among themselves by $\beta$ and $\widehat\beta$ would not be a knot.
One can then see that $\beta$ contains, up to reversing order and cyclic permutation, one of the subwords
$\sigma_3^3\sigma_4^2\sigma_3^2\sigma_4^2$ or $\sigma_2\sigma_3\sigma_4^2\sigma_3\sigma_4\sigma_2^2\sigma_3^2\sigma_2$,
each of which guarantees the existence of a minor $\widetilde X$ in the fibre surface of $\widehat\beta$, implying $g_4(\widehat{\beta}) < g(\widehat{\beta})$. 
\end{proof}

\section{Characterisation by forbidden minors}

In the previous section, we established a characterisation of positive braid knots with maximal topological 4-genus by the forbidden surface minors $\widetilde{T},\widetilde{E},\widetilde{X}$ and $\widetilde{Y}$.
More precisely, we used all minors in the case of braid index 3, while we only needed the minor $\widetilde{X}$ in the case of minimal positive braid index $\ge 4$.
Another way to think of this is that for positive braid knots, genus defect $g-g_4 \le 0$ is characterised by these four forbidden minors.
A natural question to ask is whether a similar result holds for larger genus defect.

\begin{que}
\label{characterisation}
 For positive braid knots, can genus defect $g-g_4 \le c$ be characterised by finitely many forbidden surface minors for any $c\ge0$?
\end{que}

As noted by Baader and Dehornoy~\cite{BaDe}, this is implied by Higman's Lemma if we restrict ourselves to positive braids of index bounded by some natural number $n$.
A possible way of dealing with Question~\ref{characterisation} in the unbounded case could be to give a positive answer to the following question, 
which can be thought of as a strengthening of Proposition~\ref{index4_prop}.

\begin{que}
\label{linear}
For positive braid knots, is $g-g_4$ bounded from below by an increasing affine function of the positive braid index?
\end{que}

\end{document}